\documentclass[12pt,a4paper]{article}

\usepackage[dvips]{color}
\usepackage{color}
\usepackage{xcolor}

\usepackage{amsfonts}
\usepackage{amsfonts}
\usepackage{amsfonts}
\usepackage{amsfonts}
\usepackage{amsfonts}
\usepackage{mathrsfs}
\usepackage{amsfonts}
\usepackage{mathrsfs}
\usepackage{amsfonts}
\usepackage{amsfonts}
\usepackage{mathrsfs}
\usepackage{mathrsfs}
\usepackage{amsfonts}
\usepackage{amsfonts}
\usepackage{amsfonts}
\usepackage{amsfonts}
\usepackage{mathrsfs}

\usepackage{amsfonts}
\usepackage{mathrsfs}
\usepackage{mathrsfs}
\usepackage{mathrsfs}
\usepackage{mathrsfs}
\usepackage{amsfonts}
\usepackage{mathrsfs}
\usepackage{mathrsfs}
\usepackage{amsfonts}
\usepackage{amsfonts}
\usepackage{amsfonts}
\usepackage{amsfonts}
\usepackage{amsfonts}
\usepackage{amsfonts}
\usepackage{amsfonts}
\usepackage{amsfonts}
\usepackage{amsfonts}
\usepackage{mathrsfs}
\usepackage{amsfonts}
\usepackage{mathrsfs}
\usepackage{amsfonts}
\usepackage{mathrsfs}
\usepackage{amsfonts}
\usepackage{amsfonts}
\usepackage{amsfonts}
\usepackage{amsfonts}
\usepackage{amsfonts}
\usepackage{amsfonts}
\usepackage{amsfonts}
\usepackage{amsfonts}
\usepackage{amsfonts}
\usepackage{amsfonts}
\usepackage{amsfonts}
\usepackage{amsfonts}
\usepackage{mathrsfs}
\usepackage{mathrsfs}
\usepackage{mathrsfs}
\usepackage{mathrsfs}
\usepackage{mathrsfs}
\usepackage{mathrsfs}
\usepackage{mathrsfs}
\usepackage{mathrsfs}
\usepackage{mathrsfs}
\usepackage{amsfonts}
\usepackage{amsfonts}
\usepackage{amsfonts}
\usepackage{amsfonts}
\usepackage{amsfonts}
\usepackage{amsfonts}
\usepackage{mathrsfs}
\usepackage{amsfonts}
\usepackage{amsfonts}
\usepackage{amsfonts}
\usepackage{amsfonts}
\usepackage{amsfonts}
\usepackage{amsfonts}
\usepackage{amsfonts}
\usepackage{amsfonts}
\usepackage{amsfonts}
\usepackage{amsfonts}
\usepackage{amsfonts}
\usepackage{amsfonts}
\usepackage{mathrsfs}
\usepackage{amsfonts}
\usepackage{mathrsfs}
\usepackage{amsfonts}
\usepackage{amsfonts}
\usepackage{amsfonts}
\usepackage{mathrsfs}
\usepackage{amsfonts}
\usepackage{amsfonts}
\usepackage{amsfonts}
\usepackage{amsfonts}
\usepackage{amsmath}
\usepackage{mathrsfs}
\usepackage{amsfonts}
\usepackage{amsfonts}
\usepackage{amsfonts}
\usepackage{amsfonts}
\usepackage{amsfonts}
\usepackage{amsfonts}
\usepackage{amsfonts}
\usepackage{amsfonts}
\usepackage{amsfonts}
\usepackage{amsfonts}
\usepackage{amsfonts}
\usepackage{mathrsfs}
\usepackage{amsfonts}
\usepackage{amsfonts}
\usepackage{amsfonts}
\usepackage{amsfonts}
\usepackage{amsmath}
\usepackage{mathrsfs}
\usepackage{mathrsfs}
\usepackage{mathrsfs}
\usepackage{mathrsfs}
\usepackage{mathrsfs}
\usepackage{mathrsfs}
\usepackage{mathrsfs}
\usepackage{mathrsfs}
\usepackage{amsfonts}
\usepackage{amsfonts}
\usepackage{amsfonts}
\usepackage{amsfonts}
\usepackage{amsfonts}
\usepackage{amsfonts}
\usepackage{amsfonts}
\usepackage{amsfonts}
\usepackage{amsfonts}
\usepackage{amsfonts}
\usepackage{amsfonts}
\usepackage{amsfonts}
\usepackage{mathrsfs}
\usepackage{amsfonts}
\usepackage{mathrsfs}
\usepackage{amsfonts}
\usepackage{amsfonts}
\usepackage{amsfonts}
\usepackage{amsfonts}
\usepackage{amsfonts}
\usepackage{amsfonts}
\usepackage{amsfonts}
\usepackage{amsfonts}
\usepackage{amsfonts}
\usepackage{amsfonts}
\usepackage{amsfonts}
\usepackage{amsfonts}
\usepackage{amsfonts}
\usepackage{amsfonts}
\usepackage{amsfonts}
\usepackage{amsfonts}
\usepackage{amsfonts}
\usepackage{amsfonts}
\usepackage{mathrsfs}
\usepackage{mathrsfs}
\usepackage{mathrsfs}
\usepackage{mathrsfs}
\usepackage{amsfonts}
\usepackage{amsfonts}
\usepackage{amsfonts}
\usepackage{amsfonts}
\usepackage{amsfonts}
\usepackage{amsfonts}
\usepackage{amsfonts}
\usepackage{amsfonts}
\usepackage{amsfonts}
\usepackage{amsfonts}
\usepackage{amsfonts}
\usepackage{amsfonts}
\usepackage{amsfonts}
\usepackage{amsfonts}
\usepackage{amsfonts}
\usepackage{amsfonts}
\usepackage{amsfonts}
\usepackage{mathrsfs}
\usepackage{mathrsfs}
\usepackage{amsfonts}
\usepackage{amsfonts}
\usepackage{amsfonts}
\usepackage{amsfonts}
\usepackage{amsfonts}
\usepackage{amsfonts}
\usepackage{amsfonts}
\usepackage{amsfonts}
\usepackage{amsfonts}
\usepackage{amsfonts}
\usepackage{amsfonts}
\usepackage{amsfonts}
\usepackage{amsfonts}
\usepackage{mathrsfs}
\usepackage{mathrsfs}
\usepackage{amsfonts}
\usepackage{amsfonts}
\usepackage{amsfonts}
\usepackage{amsfonts}
\usepackage{amsfonts}
\usepackage{amsfonts}
\usepackage{mathrsfs}
\usepackage{mathrsfs}
\usepackage{amsfonts}
\usepackage{amsfonts}
\usepackage{amsfonts}
\usepackage{amsfonts}
\usepackage{amsfonts}
\usepackage{amsfonts}
\usepackage{amsfonts}
\usepackage{amsfonts}
\usepackage{amsfonts}
\usepackage{amsfonts}
\usepackage{amsfonts}
\usepackage{amsfonts}
\usepackage{amsfonts}
\usepackage{amsfonts}
\usepackage{mathrsfs}
\usepackage{amsfonts}
\usepackage{amsfonts}
\usepackage{amsfonts}
\usepackage{amsfonts}
\usepackage{amsfonts}
\usepackage{amsfonts}
\usepackage{amsfonts}
\usepackage{amsfonts}
\usepackage{amsfonts}
\usepackage{amsfonts}
\usepackage{amsfonts}
\usepackage{amsfonts}
\usepackage{amsfonts}
\usepackage{amsfonts}
\usepackage{amsfonts}
\usepackage{amsfonts}
\usepackage{amsmath}
\usepackage{mathrsfs}
\usepackage{amsfonts}
\usepackage{amsfonts}
\usepackage{amsfonts}
\usepackage{amsfonts}
\usepackage{amsmath}
\usepackage{mathrsfs}
\usepackage{amsfonts}
\usepackage{mathrsfs}
\usepackage{mathrsfs}
\usepackage{mathrsfs}
\usepackage{mathrsfs}
\usepackage{amsmath}
\usepackage{mathrsfs}
\usepackage{amsfonts}
\usepackage{color}
\usepackage{mathrsfs}

\usepackage{stmaryrd}
\usepackage{amsmath}
\usepackage{amssymb}

\usepackage{bbm}
\usepackage{mathrsfs}
\usepackage{graphicx}
\usepackage{cite}

\usepackage{enumerate}
\usepackage{thm}
\makeatletter
\def\tank#1{\protected@xdef\@thanks{\@thanks
 \protect\footnotetext[0]{#1}}}
\def\bigfoot{

 \@footnotetext}
\makeatother

\topmargin=-10mm \oddsidemargin=-1mm \evensidemargin=-1mm
\textwidth=165mm \textheight=24.5cm

\newcommand{\ea}{\end{array}}

\allowdisplaybreaks

\newtheorem{theorem}{Theorem}[section]
\newtheorem{prp}[theorem]{Proposition}
\newtheorem{lemma}[theorem]{Lemma}
\newtheorem{Assumption}{Assumption}[section]
\theoremstyle{definition}
\newtheorem{definition}[theorem]{Definition}

\theoremstyle{remark}
\newtheorem{remark}[theorem]{Remark}

\numberwithin{equation}{section}

{\theorembodyfont{\rmfamily}
}

\title{Large deviation principles and Malliavin derivative for mean reflected stochastic differential equations}

\footnotesize{\author{
Ping Chen$^{1,}$\thanks{chenping@mail.ustc.edu.cn},\ \
 Jianliang Zhai$^{1,}$\thanks{zhaijl@ustc.edu.cn}\\
 {\em $^1$ School of Mathematical Sciences,}\\
 {\em University of Science and Technology of China,}\\
 {\em Hefei, 230026, China}
}
\date{}
\newenvironment{proof}{\par\noindent{\bf Proof:}}{\hspace*{\fill}$\blacksquare$\par}
\begin{document}
\maketitle
\noindent \textbf{Abstract:}
In this paper, we consider a class of reflected stochastic differential equations for which the constraint is not on the paths of the solution but on its law. We establish a small noise large deviation principle, a large deviation for short time and the Malliavin derivative. To prove large deviation principles, a sufficient
condition for the weak convergence method, which is suitable for Mckean-Vlasov stochastic
differential equation, plays an important role.


\vspace{4mm}


\vspace{3mm}
\noindent \textbf{Key Words:}
Mean reflected stochastic differential equation; Large derivative principle; Weak convergence method; Malliavin derivative.
\numberwithin{equation}{section}
\vskip 0.3cm
\noindent \textbf{AMS Mathematics Subject Classification:} 60H10; 60H15.

\section{Introduction}
In this paper, we study a class of mean reflected stochastic differential equation (SDE):
\begin{align}
\left\{
\begin{aligned}\label{II-eq1.1}
& X_{t}=\xi + \int_0^t b(X_{s})\mathrm{d}s+\ \int_0^t \sigma(X_{s})\mathrm{d}B_{s}+\ K_{t},\ t\in [0,T],  \\
& \mathbb{E}[h(X_{t})]\geq 0, \quad
\int_0^t \mathbb{E}[h(X_{s})]\mathrm{d}K_{s}=0,\ t\in[0,T],\\
\end{aligned}
\right.
\end{align}
where the initial date $\xi \in \mathbb{R}$, $\{ B_t \}_{t \geq 0}$ is a  one-dimensional standard Brownian motion defined on a complete probability space
$(\Omega,\mathcal{F},\mathbb{F},\mathbb{P})$ with the augmented filtration $\mathbb{F}:=\{{\mathcal{F}}_t\}_{t\geq 0}$ generated by $\{ B_t \}_{t \geq 0}$, $b$, $\sigma$ and $h$ are given functions from $\mathbb{R}$ to $\mathbb{R}$. For the precise conditions on $b$, $\sigma$ and $h$, we refer the reader to Section 2.


Reflected SDEs have been widely studied in stochastic analysis since the works by Skorokhod \cite{SAV1,SAV2}. There is an extensive literature on reflected differential equations with different kinds of reflections, including normal reflection \cite{T,LS,S}, oblique reflection \cite{C,DI,WYZ2,WeiYangZhai} and sticky reflection \cite{GV,GM,G1,G2,EP}, etc. In this current work, we focus on mean reflected SDEs in which the constraint is on the law of the solution rather than on its paths. Motivated by super-hedging of claims under running risk management constraints, the first paper to study mean reflected SDEs was \cite{BEH}, which considered the backward form and proved the existence and uniqueness. In \cite{BDG}, the authors studied the forward form, and its approximation by a suitable interacting particle system and numerical schemes. In \cite{DL}, the authors got the Talagrand quadratic transportation cost inequality for the law of the solution of mean reflected backward SDE. We also refer to \cite{BGL} for the case of jumps and \cite{BCPD} for the mulit-dimensional case.

Large deviation principle (LDP) plays an important role in stochastic analysis, which can provide an exponential estimate for the probability of the rare event in terms of some explicit rate function. The earliest framework was introduced by Varadhan in \cite{Varadhan1966Asymptotic} and \cite{varadhan1967diffusion}, in which the small noise and small time LDP for finite dimensional diffusion processes
were studied, respectively. The literature on the problem is huge, and  a listing of them can be found in \cite{WYJT}.
Small noise (also called Freidlin--Wentzell type) LDP for SDEs with normal reflection or oblique reflection have been explored in a number of works; see, e.g. \cite{RXZ,MSW,RWZZZ,AdamsReisRavaille} and the references therein.  To the best of our knowledge, there is no results on small time LDP for SDEs with reflection.


In this paper, our first aim is to establish the small noise and small time LDPs for (\ref{II-eq1.1}).
Compared with the previous corresponding results, new difficulties occur due to the fact that the mean reflection process $K$ depends on the law of the position.

 The first paper dealing with  LDP for reflected Mckean-Vlasov SDE was published by \cite{AdamsReisRavaille}, in which  LDP for reflected Mckean-Vlasov SDE with normal reflection on time independent convex domain is established. The authors in \cite{AdamsReisRavaille} adopted the exponential equivalence arguments, certain time discretization and approximating technique, assuming that the coefficients satisfy some
extra time $\rm H\ddot{o}lder$ continuity conditions; see Assumption 4.1 in \cite{AdamsReisRavaille}. These approaches are very difficult to be applied to the case of infinite dimensional situations and the case of L\'evy driving noise, and requires stronger conditions on the coefficients as mentioned above.

The weak convergence method is proved to be a powerful tool to establish LDPs for various dynamical systems. Recently, the second author of this paper and his collaborators in \cite{WYJT} presented a sufficient condition to prove the criteria of the weak convergence method.  The sufficient condition is suitable for proving LDPs for distribution-dependent SDEs in finite and infinite dimensions.
The second author of this paper and his collaborators in \cite{WeiYangZhai} have used this sufficient condition to prove a Fredlin-Wentzell type  LDP for a class of Mckean-Vlasov SDE with oblique reflection over an non-smooth time dependent domain. Their proof seems to be smoother than that in \cite{AdamsReisRavaille}, and no extra regularity with respect to time on the coefficients is required.
 To study LDPs for (\ref{II-eq1.1}), we will
use the sufficient condition introduced in \cite{WYJT} and a representation formula of $K$. Let us stress that the
key to fully use the weak convergence method to establish LDPs for distribution-dependent SDEs
is to find the correct stochastic control equations; see Example 1.1 in \cite{WYJT} and (\ref{III-eq2.18}) and Remark \ref{rem 1} in this paper.

The second aim of this paper is to study the Malliavin derivative for (\ref{II-eq1.1}), and as an application of this result, if $\sigma$ is nondegenerate, i.e., $\sigma(x)\neq0$ for all $x\in\mathbb{R}$,  then for any $0 \leq t \leq T$, the law of $X_t$ is absolutely continuous with respect to the Lebesgue measure on $\mathbb{R}$.



The organization of the paper is as follows. In section 2, we introduce the basic concepts, notations and assumptions. The small noise LDP for equation \eqref{II-eq1.1} is established in section 3. Then we give the large deviations for short time in section 4. Finally, in section 5, we study the Malliavin derivative for \eqref{II-eq1.1} and its application.


\section{Preliminary}

Throughout this paper, we will make the following assumptions.
\begin{Assumption}\label{ass0}
The functions $b:\mathbb{R} \mapsto \mathbb{R}$ and $\sigma : \mathbb{R} \mapsto \mathbb{R}$ are Lipschitz continuous.
\end{Assumption}

\begin{Assumption}\label{ass1}
$(i)$ \ The function $h: \mathbb{R} \rightarrow \mathbb{R} $ is an increasing function and there exist $0 < m \leq M$ such that for each
$x$, $y \in \mathbb{R}$
\[ m|x-y| \leq |h(x)-h(y)| \leq  M|x-y|;\]

$(ii)$ \ The initial condition $\xi$ satisfies $h(\xi) \geq 0$.

\end{Assumption}

For the purpose of the abstract analysis in this paper,
 we use the following notations and conclusions. Let $\mathcal{P}_1(\mathbb{R})$ be the set of probability measures on $\mathbb{R}$ with finite first moment. For $\mu$, $\nu \in \mathcal{P}_1(\mathbb{R}) $, the Wasserstein-1 distance between $\mu$ and $\nu$ is defined as:
\begin{eqnarray} \label{eq Wa dis}
W_1(\mu,\nu)=\sup_{g \in Lip_{1}(\mathbb{R})} \bigg|\int_\mathbb{R} g (\mathrm{d}\mu - \mathrm{d}\nu)\bigg|=\inf_{X \sim \mu ; \ Y \sim \nu}\mathbb{E}[|X-Y|],
\end{eqnarray}
where $Lip_{1}(\mathbb{R}):=\{g: \mathbb{R} \mapsto \mathbb{R}:  \sup_{x \neq y}\frac{|g(x)-g(y)|}{|x-y|}\leq 1\}$.

Define the function
\begin{equation}\label{II-eq2.2}
H: \mathbb{R} \times \mathcal P_1(\mathbb{R}) \ni(x,\nu) \mapsto H(x,\nu)=\int_\mathbb{R} h(x+z)\nu(\mathrm{d}z),
\end{equation}
and
\begin{equation}\label{II-eq2.3}
G_0: \mathcal P_1(\mathbb{R}) \ni \nu \mapsto \inf\{ x\geq 0: H(x,\nu)\geq 0 \}.
\end{equation}

 The following property of $G_0$ will be used several times
throughout this paper.
\begin{lemma}\label{lem1}(\cite[Lemma 2.2] {BDG})
Under Assumption \ref{ass1}, the function $G_0$ is Lipschitz continuous. Namely, for each $\mu$, $\nu \in \mathcal{P}_{1}(\mathbb{R})$
\begin{equation}\label{II-eq2.4}
|G_0(\mu)-G_0(\nu)| \leq \frac{M}{m}W_1(\mu, \nu).
\end{equation}
\end{lemma}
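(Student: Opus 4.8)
The plan is to first record three elementary facts about the map $x \mapsto H(x,\nu)$ that turn $G_0(\nu)$ into a genuine (and nicely characterized) root. Since $\nu \in \mathcal P_1(\mathbb{R})$ and $h$ is $M$-Lipschitz, $H(x,\nu)=\int_{\mathbb{R}}h(x+z)\,\nu(\mathrm{d}z)$ is finite for every $x$. Because $h$ is increasing, $H(\cdot,\nu)$ is nondecreasing; more quantitatively, the lower bound $m|x-y|\le |h(x)-h(y)|$ together with monotonicity gives $h(x_2+z)-h(x_1+z)\ge m(x_2-x_1)$ pointwise for $x_1<x_2$, so after integration
\begin{equation}\label{II-eq2.5a}
H(x_2,\nu)-H(x_1,\nu)\ge m\,(x_2-x_1),\qquad x_1\le x_2 .
\end{equation}
In particular $H(\cdot,\nu)$ is strictly increasing, tends to $+\infty$, and (by the $M$-Lipschitz upper bound) is continuous. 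Hence the infimum defining $G_0(\nu)$ is attained, one always has $H(G_0(\nu),\nu)\ge 0$, and whenever $G_0(\nu)>0$ one in fact has $H(G_0(\nu),\nu)=0$ by the intermediate value theorem.

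The second ingredient I would isolate is a Lipschitz estimate of $H$ in the measure variable that is uniform in $x$. For fixed $x$, the map $z\mapsto \tfrac{1}{M}h(x+z)$ lies in $Lip_1(\mathbb{R})$, so the Wasserstein-1 duality \eqref{eq Wa dis} yields
\begin{equation}\label{II-eq2.5b}
\bigl|H(x,\mu)-H(x,\nu)\bigr|=\left|\int_{\mathbb{R}} h(x+z)\,(\mathrm{d}\mu-\mathrm{d}\nu)\right|\le M\,W_1(\mu,\nu)
\end{equation}
for all $x$ and all $\mu,\nu\in\mathcal P_1(\mathbb{R})$.

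To conclude, write $a=G_0(\mu)$, $b=G_0(\nu)$ and assume without loss of generality $a\ge b$. If $a=0$ then $b=0$ and there is nothing to prove, so suppose $a>0$, whence $H(a,\mu)=0$ by the first step. Since $b\le a$ and $H(\cdot,\mu)$ is nondecreasing, $H(b,\mu)\le 0$, and the slope bound \eqref{II-eq2.5a} applied on $[b,a]$ gives $m(a-b)\le H(a,\mu)-H(b,\mu)=-H(b,\mu)$. Using $H(b,\nu)\ge 0$ and then \eqref{II-eq2.5b},
\begin{equation}\label{II-eq2.5c}
m\,(a-b)\le -H(b,\mu)\le H(b,\nu)-H(b,\mu)\le \bigl|H(b,\mu)-H(b,\nu)\bigr|\le M\,W_1(\mu,\nu),
\end{equation}
which is exactly \eqref{II-eq2.4} after dividing by $m$; swapping the roles of $\mu$ and $\nu$ covers the opposite ordering. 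I expect the only genuinely delicate point to be the first step, namely justifying that the infimum in \eqref{II-eq2.3} is attained and that $G_0(\nu)>0$ forces $H(G_0(\nu),\nu)=0$ (so that $H(a,\mu)=0$ can be used as an \emph{equality}); this rests on the continuity and strict monotonicity of $H(\cdot,\nu)$, and one must also treat the boundary case $G_0=0$ separately, as done above. Everything else is a direct chaining of the slope bound \eqref{II-eq2.5a} with the measure-Lipschitz bound \eqref{II-eq2.5b}.
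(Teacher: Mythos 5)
Your proof is correct and complete: the slope bound $H(x_2,\nu)-H(x_1,\nu)\ge m(x_2-x_1)$, the duality estimate $|H(x,\mu)-H(x,\nu)|\le M\,W_1(\mu,\nu)$, and the characterization of $G_0(\nu)$ as an attained root (with the boundary case $G_0=0$ handled separately) are exactly the ingredients of the standard argument. The paper itself gives no proof, simply citing \cite[Lemma 2.2]{BDG}, and your write-up reproduces that reference's approach, so there is nothing to flag.
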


We now recall the existence and uniqueness result of \cite[Theorem 2.4] {BDG}.
\begin{definition}\label{defi1}
A couple of continuous processes $(X,K)=(X_t,K_t)_{t\in[0,T]}$ is said to be a flat deterministic solution to Eq.\eqref{II-eq1.1} if $(X, K)$ satisfy \eqref{II-eq1.1} with $X$ such that
$\mathbb{E}[\sup_{t\in [0, T]} |X_{t}|^{p}] < \infty$ for some $p \geq 2$ and $K$ being a nondecreasing deterministic function with $K_{0}=0$.
\end{definition}

Given this definition, we have the following result.
\begin{prp}\label{thm1}(\cite[Theorem 2.4] {BDG})
 Under Assumptions \ref{ass0} and \ref{ass1}, the mean reflected SDE \eqref{II-eq1.1} has a unique deterministic flat solution $(X,K)$.
 Moreover,
 \[ \forall t \geq 0, \ \ K_{t}=\sup_{s\leq t}\inf\{x \geq 0: \mathbb{E}[h(x+U_{s})] \geq 0\} = \sup_{s\leq t}G_0(\mu_s),\]
 where $(U_{t})_{0\leq t \leq T}$ is the process defined by:
\begin{equation}\label{II-eq2.7}
U_{t}= \xi + \int_0^t b(X_{s})\mathrm{d}s+\int_0^t \sigma(X_{s})\mathrm{d}B_{s},
\end{equation}
and $(\mu_{t})_{0\leq t \leq T}$ is the family of marginal laws of $(U_{t})_{0 \leq t \leq T}$.
\end{prp}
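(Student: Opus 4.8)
The plan is to recast \eqref{II-eq1.1} as a fixed-point problem and to solve it by a Picard-type contraction argument, exploiting the Lipschitz regularity of $G_0$ recorded in Lemma \ref{lem1}. Since the reflecting term $K$ is required to be deterministic, writing $U$ as in \eqref{II-eq2.7} we have $X_t=U_t+K_t$, and the mean constraint becomes purely deterministic: if $\mu_t$ denotes the law of $U_t$, then $\mathbb{E}[h(X_t)]=\mathbb{E}[h(U_t+K_t)]=H(K_t,\mu_t)$, so that \eqref{II-eq1.1} is equivalent to the system $H(K_t,\mu_t)\ge 0$ and $\int_0^t H(K_s,\mu_s)\,\mathrm{d}K_s=0$ with $K$ nondecreasing and $K_0=0$. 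Thus, for a \emph{given} law flow $(\mu_t)_{t\in[0,T]}$, the problem for $K$ is a one-dimensional deterministic Skorokhod problem driven by the running mean.

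The first key step is to solve this deterministic problem explicitly. Assumption \ref{ass1}$(i)$ makes $x\mapsto H(x,\nu)$ continuous and strictly increasing, indeed $H(x+\delta,\nu)-H(x,\nu)\ge m\delta$ for $\delta\ge0$, so $G_0(\nu)$ is finite; moreover $G_0(\nu)>0$ forces $H(G_0(\nu),\nu)=0$, while $G_0(\nu)=0$ gives $H(0,\nu)\ge0$. I would then verify that among all admissible $K$ the minimal one is exactly $K_t=\sup_{s\le t}G_0(\mu_s)$: it is nondecreasing with $K_0=0$, using $h(\xi)\ge0$ from Assumption \ref{ass1}$(ii)$ to get $G_0(\mu_0)=0$; it satisfies $H(K_t,\mu_t)\ge H(G_0(\mu_t),\mu_t)\ge0$ by monotonicity of $H$ in its first argument; and it is flat because the Stieltjes measure $\mathrm{d}K$ is carried by the set of times $s$ at which the running supremum is attained, that is $K_s=G_0(\mu_s)>0$, where $H(K_s,\mu_s)=0$.

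For existence and uniqueness of $X$, I would run a fixed-point scheme on the space of continuous $\mathbb{F}$-adapted processes with $\mathbb{E}[\sup_{t\le T}|X_t|^p]<\infty$. Given $X$, form $U$, set $\mu_t=\mathrm{Law}(U_t)$ and $K_t=\sup_{s\le t}G_0(\mu_s)$, and define $\Phi(X)_t=U_t+K_t$; a fixed point is precisely a flat deterministic solution. To estimate $\Phi(X)-\Phi(X')$ I would bound the difference of the $U$-parts by standard SDE moment estimates based on the Lipschitz property of $b,\sigma$ (Assumption \ref{ass0}), control the difference of the $K$-parts through $|K_t-K_t'|\le\sup_{s\le t}|G_0(\mu_s)-G_0(\mu_s')|\le\frac{M}{m}\sup_{s\le t}W_1(\mu_s,\mu_s')$ by Lemma \ref{lem1}, and finally use $W_1(\mu_s,\mu_s')\le\mathbb{E}|U_s-U_s'|$, which is immediate from the coupling form of the Wasserstein distance in \eqref{eq Wa dis}. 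A Gronwall argument, on a short interval first and then iterated over $[0,T]$ (or with an exponentially weighted norm), yields the contraction and hence a unique fixed point.

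The step I expect to be the main obstacle is closing the contraction in a way that simultaneously handles the path-dependence through the running supremum and the law-dependence through $G_0$: the estimate for $K$ lives in the supremum-in-time norm, and one must ensure that feeding $W_1(\mu_s,\mu_s')\le\mathbb{E}|U_s-U_s'|$ back into the moment estimate for $U$ preserves the Gronwall structure, bearing in mind that $b$ and $\sigma$ are evaluated at $X=U+K$ rather than at $U$. Once the fixed point is secured, uniqueness of the full pair $(X,K)$ follows because the deterministic Skorokhod step forces $K_t=\sup_{s\le t}G_0(\mu_s)$, which is exactly the asserted representation, completing the proof.
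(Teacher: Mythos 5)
This proposition is quoted from \cite[Theorem 2.4]{BDG} and the paper supplies no proof of its own, so the only meaningful comparison is with that reference: your argument --- reducing the mean constraint to the deterministic Skorokhod-type problem $H(K_t,\mu_t)\ge 0$, $\int_0^t H(K_s,\mu_s)\,\mathrm{d}K_s=0$, solving it explicitly by $K_t=\sup_{s\le t}G_0(\mu_s)$, and closing a Picard/Gronwall contraction for $X\mapsto U+K$ via Lemma \ref{lem1} together with $W_1(\mu_s,\mu_s')\le \mathbb{E}|U_s-U_s'|$ --- is essentially the proof given there, and it is correct. The one step worth tightening is the flatness verification: on the support of $\mathrm{d}K$ one can have $K_s=G_0(\mu_s)=0$ with $H(K_s,\mu_s)>0$, so you should add that $\mathrm{d}K$ assigns zero mass to $\{s: K_s=0\}$ (this set is an initial interval $[0,\tau]$ on which $K$ is identically zero), after which the integral indeed vanishes.
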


In the paper, $C_p$, $L_p$, etc. are  positive constants depending on some parameter $p$, and $C$, $L$, etc. are
constants depending on no specific parameter, whose value may be different from
line to line by convention.

\section{Small noise Large deviation principle}\label{Section 3}
For each $\epsilon > 0$, by  Proposition \ref{thm1}, there exists a unique flat deterministic solution $(X^\epsilon, K^\epsilon)$ to the following mean reflected SDE:
\begin{equation}\label{III-eq2.1}
\left\{
\begin{aligned}
& X_{t}^\epsilon=\xi + \int_0^t b(X_{s}^\epsilon)\mathrm{d}s+\ \sqrt{\epsilon}\int_0^t \sigma(X_{s}^\epsilon)\mathrm{d}B_{s}+\ K_{t}^\epsilon,\  t \in
[0,T], \\
& \mathbb{E}[h(X_{t}^\epsilon)]\geq 0, \quad
\int_0^t \mathbb{E}[h(X_{s}^\epsilon)]\mathrm{d}K_{s}^\epsilon=0, \ t \in [0,T].\\
\end{aligned}
\right.
\end{equation}
In this section, we consider a small noise LDP to $X^\epsilon$ as $\epsilon$ tending to 0.

 To state the main result in this section, we first introduce the following mean reflected ODE:
\begin{align}\label{III-eq2.2}
\left\{
\begin{aligned}
& X^0_{t}=\xi+\int_0^t b(X^0_s)\mathrm{d}s + K^0_t,  \  t \in [0,T], \\
& h(X^0_t)\geq 0, \quad
\int_0^t h(X^0_s)\mathrm{d}K^0_s=0, \  t \in [0,T].\\
\end{aligned}
\right.
\end{align}
 Proposition \ref{thm1} implies that there exists a unique solution $(X^0, K^0)$ to Eq. (\ref{III-eq2.2}).

For each $\varphi \in L^2([0,T],\mathbb{R})$, consider the so called skeleton equation:
\begin{equation}\label{III-eq2.16}
Y^\varphi_t= \xi + \int_0^t b(Y^\varphi_s)\mathrm{d}s + \int_0^t \sigma (Y^\varphi_s) \cdot \varphi(s)\mathrm{d}s + K^0_t,\ \ t\in [0,T].
\end{equation}
Here we stress that $K^0$ in Eq. \eqref{III-eq2.16} is the second part of the strong solution $(X^0, K^0)$ to Eq. \eqref{III-eq2.2}.
By standard arguments, we have the following result:
\begin{prp}\label{prp2}
Under Assumption \ref{ass0}, there exists a unique solution to Eq. \eqref{III-eq2.16}.
\end{prp}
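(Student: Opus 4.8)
The plan is to recognize that, since the reflection term $K^0$ is already fixed as the second component of the solution $(X^0,K^0)$ to \eqref{III-eq2.2}, equation \eqref{III-eq2.16} carries no implicit constraint of its own: it is simply a deterministic integral equation of Carath\'eodory type with a known continuous forcing term $\xi+K^0_t$. Writing $f(s,y):=b(y)+\sigma(y)\varphi(s)$, the equation reads $Y^\varphi_t=\xi+K^0_t+\int_0^t f(s,Y^\varphi_s)\,\mathrm{d}s$, and I would solve it by a Banach fixed point argument on $C([0,T],\mathbb{R})$.

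First I would define the map $\Phi$ on $C([0,T],\mathbb{R})$ by $\Phi(Y)_t:=\xi+K^0_t+\int_0^t b(Y_s)\,\mathrm{d}s+\int_0^t \sigma(Y_s)\varphi(s)\,\mathrm{d}s$ and check that it maps $C([0,T],\mathbb{R})$ into itself. Continuity of $t\mapsto\Phi(Y)_t$ follows because $K^0$ is continuous (being the nondecreasing deterministic part of the flat solution in Definition \ref{defi1}), because $s\mapsto b(Y_s)$ is bounded and continuous on $[0,T]$, and because $s\mapsto\sigma(Y_s)\varphi(s)$ is integrable. Here I use that $\varphi\in L^2([0,T],\mathbb{R})\subset L^1([0,T],\mathbb{R})$ on the finite interval and that $\sigma(Y_\cdot)$ is bounded by continuity; this is where the integrability of the control enters.

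The core estimate is a Gr\"onwall argument with a time-dependent, integrable weight. For $Y,\tilde Y\in C([0,T],\mathbb{R})$, Assumption \ref{ass0} gives
\[
|\Phi(Y)_t-\Phi(\tilde Y)_t|\leq \int_0^t \big(L_b+L_\sigma|\varphi(s)|\big)\,|Y_s-\tilde Y_s|\,\mathrm{d}s,
\]
where $L_b,L_\sigma$ are the Lipschitz constants of $b,\sigma$. Since $\ell(s):=L_b+L_\sigma|\varphi(s)|$ belongs to $L^1([0,T])$, iterating this inequality bounds the $n$-th iterate of $\Phi$ by the kernel $\big(\int_0^t\ell(s)\,\mathrm{d}s\big)^n/n!$, which tends to $0$; equivalently, equipping $C([0,T],\mathbb{R})$ with the weighted norm $\|Y\|_\beta:=\sup_{t\in[0,T]}e^{-\beta\int_0^t\ell(s)\,\mathrm{d}s}|Y_t|$ makes $\Phi$ a strict contraction for $\beta>1$. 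The Banach fixed point theorem then yields a unique fixed point, which is the unique solution of \eqref{III-eq2.16}; uniqueness also follows directly from the displayed estimate via the generalized Gr\"onwall inequality.

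The only mild obstacle is that $\varphi$ is merely square-integrable rather than bounded, so one cannot use a constant Lipschitz bound in time; the remedy is precisely the integrable weight $\ell\in L^1$ and the weighted-norm contraction (equivalently, Carath\'eodory--Picard--Lindel\"of theory). Global existence on all of $[0,T]$ is automatic here because the Lipschitz-in-space bound is integrable in time, so no finite-time blow-up can occur, and there is no nonlinear coupling with a reflection term to contend with, $K^0$ being given. For this reason the result is genuinely routine, as the statement of Proposition \ref{prp2} already indicates.
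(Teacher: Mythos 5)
Your proof is correct and is exactly the ``standard arguments'' the paper invokes without writing out: since $K^0$ is a fixed continuous deterministic input, \eqref{III-eq2.16} is a Carath\'eodory ODE with Lipschitz coefficients and time-dependent Lipschitz weight $L_b+L_\sigma|\varphi(s)|\in L^1([0,T])$, and your weighted-norm Banach fixed point (or iterated-kernel) argument settles existence and uniqueness. Nothing is missing.
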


We now state the main result in this section.
\begin{theorem}\label{thm3}
Suppose that Assumptions \ref{ass0} and \ref{ass1} hold, then the family $\{X^\epsilon\}_{\epsilon>0}$ satisfies a LDP  on the space $C([0,T],\mathbb{R})$ as $\epsilon$ tend to 0 with the rate function
\begin{equation}\label{III-eq2.17}
 I(g):=\inf_{\{\varphi \in L^2([0,T],\mathbb{R}):g=Y^\varphi\}} \bigg\{ \frac{1}{2}\int_0^T|\varphi(s)|^2 \mathrm{d}s    \bigg\},\ \forall g\in C([0,T],\mathbb{R}),
\end{equation}
with the convention $\inf{\emptyset}=\infty$, here $Y^\varphi$ solves Eq. \eqref{III-eq2.16}.
\end{theorem}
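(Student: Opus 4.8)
The plan is to apply the weak convergence method through the sufficient condition of \cite{WYJT}, which is designed for distribution-dependent equations. Write $X^\epsilon=\mathcal G^\epsilon(\sqrt{\epsilon}B)$ for the solution map of \eqref{III-eq2.1}. The decisive structural observation is that the reflection $K^\epsilon$ supplied by Proposition \ref{thm1} is a \emph{deterministic} function of time, since it is determined by the marginal laws $\mu^\epsilon_\cdot=\mathcal L(U^\epsilon_\cdot)$ under $\mathbb P$. Freezing this deterministic $K^\epsilon$ turns \eqref{III-eq2.1} into an ordinary (law-free) SDE, so that for a control $\varphi^\epsilon$ in the set $\mathcal A_M$ of $\mathbb F$-predictable processes with $\int_0^T|\varphi^\epsilon(s)|^2\mathrm ds\le M$ a.s., the Girsanov shift produces the stochastic controlled equation
\[
\bar X^\epsilon_t=\xi+\int_0^t b(\bar X^\epsilon_s)\mathrm ds+\sqrt{\epsilon}\int_0^t\sigma(\bar X^\epsilon_s)\mathrm dB_s+\int_0^t\sigma(\bar X^\epsilon_s)\varphi^\epsilon(s)\mathrm ds+K^\epsilon_t,
\]
carrying the \emph{same} deterministic $K^\epsilon$. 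This is precisely the feature exploited in \cite{WYJT}: the law-dependent reflection is not driven by the controlled process but stays frozen, decoupling it from the control.

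By the criterion of \cite{WYJT} it then suffices to prove two statements. (a) For every $M<\infty$, if $\varphi_n\to\varphi$ in the weak topology of $\mathcal S_M=\{\varphi:\int_0^T|\varphi(s)|^2\mathrm ds\le M\}$, then $Y^{\varphi_n}\to Y^\varphi$ in $C([0,T],\mathbb R)$. (b) For every $M<\infty$, if $\varphi^\epsilon\in\mathcal A_M$ converges in distribution to $\varphi$ as $\mathcal S_M$-valued random elements, then $\bar X^\epsilon\to Y^\varphi$ in distribution in $C([0,T],\mathbb R)$. A preliminary ingredient, needed in both, is the law of large numbers $\sup_{t\in[0,T]}\mathbb E|X^\epsilon_t-X^0_t|\to0$: combining the Lipschitz bound $|K^\epsilon_t-K^0_t|\le\frac{M}{m}\sup_{s\le t}W_1(\mu^\epsilon_s,\delta_{U^0_s})\le\frac{M}{m}\sup_{s\le t}\mathbb E|U^\epsilon_s-U^0_s|$ from Lemma \ref{lem1} with the Lipschitz continuity of $b,\sigma$ and the Burkholder--Davis--Gundy inequality, a Gronwall argument yields $K^\epsilon\to K^0$ uniformly on $[0,T]$. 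This explains why the reflection appearing in the skeleton \eqref{III-eq2.16} is the fixed $K^0$, independent of $\varphi$.

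For (a), since $K^0$ is independent of $\varphi$, the map $\varphi\mapsto Y^\varphi$ is the solution map of a classical controlled ODE. I would extract a uniformly convergent subsequence of $\{Y^{\varphi_n}\}$ from the a priori bound $\sup_n\|Y^{\varphi_n}\|_\infty<\infty$ and equicontinuity, then pass to the limit; the only nontrivial term is $\int_0^t\sigma(Y^{\varphi_n}_s)\varphi_n(s)\mathrm ds$, which converges to $\int_0^t\sigma(Y^{\varphi}_s)\varphi(s)\mathrm ds$ because $\sigma(Y^{\varphi_n}_\cdot)\to\sigma(Y^\varphi_\cdot)$ strongly in $L^2$ while $\varphi_n\to\varphi$ weakly, and uniqueness from Proposition \ref{prp2} identifies the limit as $Y^\varphi$. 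For (b), I would first establish $\sup_\epsilon\mathbb E[\sup_{t\in[0,T]}|\bar X^\epsilon_t|^p]<\infty$ and tightness of $(\bar X^\epsilon,\varphi^\epsilon)$ in $C([0,T],\mathbb R)\times\mathcal S_M$, using the uniform bound on $K^\epsilon$ and $\int_0^T|\varphi^\epsilon|^2\le M$; the vanishing-noise term $\sqrt{\epsilon}\int_0^t\sigma(\bar X^\epsilon_s)\mathrm dB_s$ tends to $0$ uniformly in probability by Burkholder--Davis--Gundy. Via Skorokhod's representation theorem I would pass, along a subsequence, to an a.s.\ limit $(\bar X,\varphi)$, insert $K^\epsilon\to K^0$, and identify $\bar X$ as a solution of \eqref{III-eq2.16}; the crucial term $\int_0^t\sigma(\bar X^\epsilon_s)\varphi^\epsilon(s)\mathrm ds\to\int_0^t\sigma(\bar X_s)\varphi(s)\mathrm ds$ is treated exactly as in (a). Uniqueness (Proposition \ref{prp2}) forces $\bar X=Y^\varphi$, a deterministic limit, yielding convergence in distribution.

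I expect the main obstacle to be the correct treatment of the law-dependent reflection: one must justify that the deterministic $K^\epsilon$ of the \emph{uncontrolled} equation is the right object to insert into the controlled dynamics and that it converges uniformly to the skeleton reflection $K^0$. This is exactly where the philosophy of \cite{WYJT} --- freezing the law under $\mathbb P$ instead of using the law of the controlled process --- is indispensable, and where the Lipschitz estimate of Lemma \ref{lem1} does the decisive work. The remaining steps (moment bounds, tightness, vanishing of the small-noise martingale, and the weak-times-strong limit of the control term) are, by contrast, routine.
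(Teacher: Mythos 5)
Your proposal is correct and follows the same overall architecture as the paper: the weak convergence method of \cite{WYJT}, the key structural observation that the deterministic reflector $K^\epsilon$ of the uncontrolled equation \eqref{III-eq2.1} stays frozen under the Girsanov shift (so the controlled equation \eqref{III-eq2.18} carries the same $K^\epsilon$), the uniform convergence $K^\epsilon\to K^0$ obtained from the representation $K^\epsilon_t=\sup_{s\le t}G_0(\mu^\epsilon_s)$ together with Lemma \ref{lem1} and a Gronwall argument, and the Arzela--Ascoli/weak-strong limit treatment of the skeleton map. The one genuine divergence is in the second verification step: you invoke the classical Budhiraja--Dupuis--Maroulas condition (if $\varphi^\epsilon$ converges in distribution in $S_M$, then $\bar X^\epsilon$ converges in distribution to $Y^\varphi$), which you propose to prove via moment bounds, tightness, Skorokhod representation and limit identification. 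The paper instead verifies the sufficient condition $(H_2)$ of \cite{MSW,WYJT}, namely $\lim_{\epsilon\to0}\mathbb{E}[\sup_{t}|Z^{\varphi^\epsilon}_t-Y^{\varphi^\epsilon}_t|^2]=0$ in \eqref{eq H2}, where $Y^{\varphi^\epsilon}$ is the skeleton driven by the \emph{same} random control; this reduces to a single Lipschitz--Gronwall estimate (see \eqref{III-eq2.22}) fed by \eqref{eq lim epsi 0}, and entirely bypasses tightness and Skorokhod representation. Both routes are legitimate and yield the same rate function; yours is more laborious but standard, while the paper's exploits the newer sufficient condition precisely to avoid the compactness machinery. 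A minor point worth tightening in your write-up: the criterion you attribute to \cite{WYJT} is in fact the older formulation, and the "same-control" condition $(H_2)$ is what \cite{WYJT} actually supplies.
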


\begin{proof}
According to Proposition \ref{prp2}, there exists a measurable mapping $\varGamma ^0:C([0,T], \mathbb{R})\rightarrow C([0,T], \mathbb{R})$ such that $Y^\varphi = \varGamma^0(\int_0^\cdot \varphi(s)\mathrm{d}s)$ for any $ \varphi\in L^2([0,T],\mathbb{R})$, here $Y^\varphi$ is the solution to Eq. \eqref{III-eq2.16}.

Let
\[S_N:=\{\varphi\in L^2([0,T],\mathbb{R}):\int_0^T |\varphi(s)|^2 \mathrm{d}s \leq N\},\]
and
\[\tilde{S}_N:=\{\phi: \phi\ is \  a \ \mathbb{R}\text{-}vauled\ \mathcal{F}_t\text{-}predictable\ process\ such\ that\ \phi(\omega)\in S_N,\
                 \mathbb{P}\text{-}a.s. \}\]
Throughout this paper, $S_N$ is endowed with the weak topology on $L^2([0,T],\mathbb{R})$ and it is a polish space.

For each $\epsilon>0$, consider the following SDE
\begin{eqnarray}\label{eq Z epsilon}
Z^\epsilon_t=\xi + \int_0^t b(Z_{s}^\epsilon) \mathrm{d}s+\ \sqrt{\epsilon}\int_0^t \sigma(Z_{s}^\epsilon) \mathrm{d}B_{s}+\ K_{t}^\epsilon.
\end{eqnarray}
Here we remark that $K^\epsilon$ in Eq. \eqref{eq Z epsilon} is the second part of the strong solution $(X^\epsilon, K^\epsilon)$ to Eq. \eqref{III-eq2.1} and $K^\epsilon$ is a nondecreasing deterministic function with $K^\epsilon_{0}=0$.

 Under Assumption \ref{ass0}, it is easy to see that there exists a unique strong solution $Z^\epsilon$ to Eq. \eqref{eq Z epsilon} and $Z^\epsilon=X^\epsilon$. By the
 Yamada-Watanabe theorem, there exists a measurable mapping $\varGamma^\epsilon: C([0,T], \mathbb{R})\rightarrow C([0,T],
 \mathbb{R}) $ (which dependent on $K^\epsilon$) such that
\[X^\epsilon=Z^\epsilon=\varGamma^{\epsilon}(B(\cdot)) ,\]
and applying the Girsanov theorem, for any $N>0$ and\ $\varphi^\epsilon \in \tilde{S}_N$,
\[Z^{\varphi^\epsilon}:=\varGamma^{\epsilon}(B(\cdot)+\frac{1}{ \sqrt{\epsilon} }\int_0^\cdot \varphi^\epsilon(s)\mathrm{d}s)\]
is the solution of the following SDE
\begin{equation}\label{III-eq2.18}
Z^{\varphi^\epsilon}_t=\xi + \int_0^t b(Z^{\varphi^\epsilon}_s)\mathrm{d}s
                       +\ \sqrt{\epsilon}\int_0^t\sigma(Z^{\varphi^\epsilon}_s)\mathrm{d}B_{s}
                       +\int_0^t\sigma(Z^{\varphi^\epsilon}_s)\varphi^\epsilon(s)\mathrm{d}s
                       +\ K_{t}^\epsilon .
\end{equation}
 We stress that $K^\epsilon$ in Eq. \eqref{III-eq2.18} is the second part of the strong solution $(X^\epsilon, K^\epsilon)$ to Eq. \eqref{III-eq2.1}; see Remark \ref{rem 1} for more details.

According to Theorem 3.2 in \cite{MSW} or Theorem 4.4 in \cite{WYJT}, Theorem \ref{thm3} is established once we have proved:

$(H_1)$\ for every $N < +\infty$ and any family $\{\varphi_n, n\in \mathbb{N}\} \subset S_N$ converging weakly to some elememt $\varphi$
as $n \rightarrow \infty $, $\varGamma^{0}(\int_0^ \cdot \varphi_n(s)\mathrm{d}s) $ converges to $\varGamma^{0}(\int_0^ \cdot \varphi(s)\mathrm{d}s)$
in the space $C([0,T], \mathbb{R})$, that is,
$$
\lim_{n\rightarrow\infty}\sup_{t\in[0,T]}\Big|\varGamma^{0}(\int_0^ \cdot \varphi_n(s)\mathrm{d}s)(t)-\varGamma^{0}(\int_0^ \cdot \varphi(s)\mathrm{d}s)(t)\Big|=0.
$$

$(H_2)$\ for every $N < +\infty $ and any family $\{\varphi^ \epsilon , \epsilon >0\}\subset \tilde{S}_N$ ,
\begin{eqnarray}\label{eq H2}
\lim_{\epsilon \rightarrow 0} \mathbb{E}[\sup_{0 \leq t \leq T}|Z^{\varphi^\epsilon}_t-Y^{\varphi^\epsilon}_t|^2] = 0,
\end{eqnarray}
where
$Y^{\varphi^{\epsilon}} =\varGamma^{0}(\int_0^\cdot \varphi^{\epsilon}(s)\mathrm{d}s)$.

The proofs of the above two claims are divided into the following two steps.

{\bf Step 1: Proof of Claim $(H_1)$.}

For simplicity we write $Y^n=\varGamma^{0}(\int_0^ \cdot \varphi_n(s)\mathrm{d}s) $ and $Y=\varGamma^{0}(\int_0^ \cdot \varphi(s)\mathrm{d}s)$.

Notice that $Y^n$ is the solution to (\ref{III-eq2.16}) with $\varphi$ replaced by $\varphi_n$, that is,
\begin{equation}\label{eq Yn}
Y^n_t= \xi + \int_0^t b(Y^n_s)\mathrm{d}s + \int_0^t \sigma (Y^n_s) \cdot \varphi_n(s)\mathrm{d}s + K^0_t,\ \ t\in [0,T].
\end{equation}
By Assumption 2.1, $\varphi_n\in S_N$, and the fact that $K^0$ is a nondecreasing deterministic continuous function with $K^0_{0}=0$, it is not difficulty to get that there exists a constant $C_{T,N,\xi,K^0_T}$, independent of $n$, such that
\begin{eqnarray}\label{eq Uniform Yn 01}
  \sup_{n\in\mathbb{N}}\sup_{t\in[0,T]}|Y^n_t|^2
\leq
   C_{T,N,\xi,K^0_T}.
\end{eqnarray}
and for any $0\le s< t\le T$,
\begin{eqnarray}\label{eq equicontinuous Yn}
  &&|Y^n_t-Y^n_s|^2\\
&\leq&
  C(t-s)\int_s^t |b(Y^n_l)|^2\mathrm{d}l + C\int_s^t |\sigma (Y^n_l)|^2\mathrm{d}l \int_s^t |\varphi_n(l)|^2dl + C|K^0_t-K^0_s|^2\nonumber\\
&\leq&
   C_{T,N,\xi,K^0_T}\Big((t-s)^2+(t-s)\Big)+ C|K^0_t-K^0_s|^2.\nonumber
\end{eqnarray}
According to the Arzela-Ascoli theorem, $\left\{Y^n\right\}_{n\in\mathbb{N}}$
is a precompact set in $C([0,T],\mathbb{R})$. It follows that there exists a subsequence, still denoted later by $\left\{Y^n\right\}_{n\in\mathbb{N}}$, and ${Z\in C([0,T],\mathbb{R})}$ such that
\begin{equation}\label{convergence 1}
	\lim_{n \rightarrow \infty}\sup_{0\le t\le T}\left|Y^n_t-Z_t\right|=0.
	\end{equation}
Using Assumption 2.1, (\ref{convergence 1}) and the weak convergence of $\varphi_n$ to $\varphi$ in $L^2([0,T],\mathbb{R})$, passing to the limit $n\rightarrow\infty$ in (\ref{eq Yn}), we know that $Z$ is a solution to (\ref{III-eq2.16}). Due to the uniqueness of the solution for (\ref{III-eq2.16}), $Z=Y$.

The proof of Claim $(H_1)$ is complete.

{\bf Step 2: Proof of Claim $(H_2)$.}

We first establish a priori estimates.

Recall that $(X^\epsilon, K^\epsilon)$ and $(X^0, K^0)$ are the strong solutions to Eq. \eqref{III-eq2.1} and Eq. \eqref{III-eq2.2}, respectively. By using a similar arguments in the proof of Proposition 2.6 in \cite{BDG}, there exists a constant $L_{T,\xi}$ such that
\begin{eqnarray}\label{eq uniform X ep}
\sup_{\epsilon\in(0,1]}\mathbb{E}[\sup_{0\leq t \leq T}|X^\epsilon_t|^2]\leq L_{T,\xi}.
\end{eqnarray}
Since
\begin{eqnarray*}
X_{t}^\epsilon-\xi - \int_0^t b(X_{s}^\epsilon)\mathrm{d}s-\ \sqrt{\epsilon}\int_0^t \sigma(X_{s}^\epsilon)\mathrm{d}B_{s}=\ K_{t}^\epsilon,\  t \in
[0,T],
\end{eqnarray*}
by (\ref{eq uniform X ep}) and Assumption 2.1, it is easy to see that
\begin{eqnarray}\label{eq uniform K ep}
\sup_{\epsilon\in(0,1]}\sup_{t\in[0,T]}|K^\epsilon_t|\leq L_{T,\xi}.
\end{eqnarray}

Next we will prove that
\begin{eqnarray}\label{eq lim epsi 0}
\lim_{\epsilon\rightarrow0}\Big(\mathbb{E}\Big(\sup_{t\in[0,T]}|X^\epsilon_t-X^0_t|^2\Big)
                           +
                            \sup_{t\in[0,T]}|K^\epsilon_t-K^0_t|
                            \Big)
                            =0.
\end{eqnarray}

By Assumption \ref{ass0}, the Cauchy-Schwarz inequality, H\"{o}lder inequality and Burkholder-Davis-Gundy inequality, we get
\begin{equation}\label{XIII-eq1}
\begin{aligned}
   &\mathbb{E}[\sup_{t \in [0,T]}|X^{\epsilon}_t- X^{0}_t|^2] \\
&\leq 3\bigg\{ \mathbb{E}[\sup_{t\in [0,T]}|\int_0^t (b( X^{\epsilon}_s)-b(X^{0}_s))\mathrm{d}s |^2]
                          + \mathbb{E}[ \sup_{t\in [0,T]} |\sqrt{\epsilon}\int_0^t\sigma(X^{\epsilon}_s) \mathrm{d}B_s |^2 ]
                          + \sup_{t\in [0,T]}|K^{\epsilon}_t - K^{0}_t|^2   \bigg\}\\
&\leq C\bigg\{ T\mathbb{E}[ \int_0^T |b(X^{\epsilon}_s)-b(X^{0}_s)|^2 \mathrm{d}s ]
                          + \epsilon \mathbb{E}[ \int_0^T |\sigma(X^{\epsilon}_s)|^2 \mathrm{d}s ]
                          + \sup_{t\in [0,T]}|K^{\epsilon}_t - K^{0}_t|^2   \bigg\}\\
&\leq C_T\bigg\{ \int_0^T \mathbb{E}[\sup_{l \in [0,s]}|X^{\epsilon}_l - X^{0}_l|^2]\mathrm{d}s
                          + \epsilon(1+\mathbb{E}[\sup_{t \in [0,T]}| X^{\epsilon}_t|^2])
                          + \sup_{t \in [0,T]}|K^{\epsilon}_t - K^{0}_t|^2 \bigg\}\\
&\leq C_T\bigg\{ \int_0^T \mathbb{E}[\sup_{l \in [0,s]}|X^{\epsilon}_l - X^{0}_l|^2]\mathrm{d}s
                          + \epsilon(1 + L_{T,\xi})
                          + \sup_{t \in [0,T]}|K^{\epsilon}_t-\bar{K}^0_t|^2  \bigg\},
\end{aligned}
\end{equation}
here, we have used (\ref{eq uniform X ep}) in the last step.

Set
$$
U^\epsilon_t=\xi + \int_0^t b(X_{s}^\epsilon)\mathrm{d}s+\ \sqrt{\epsilon}\int_0^t \sigma(X_{s}^\epsilon)\mathrm{d}B_{s}\ \ \text{  and  } \ U^0_t=\xi + \int_0^t b(X_{s}^0)\mathrm{d}s.
$$
By Proposition \ref{thm1}, we know that
\[ \forall t \geq 0, \ \ K^\epsilon_{t}=\sup_{s\leq t}\inf\{x \geq 0: \mathbb{E}[h(x+U^\epsilon_{s})] \geq 0\} = \sup_{s\leq t}G_0(\mu^\epsilon_s),\]
and
\[ \forall t \geq 0, \ \ K^0_{t}=\sup_{s\leq t}\inf\{x \geq 0: \mathbb{E}[h(x+U^0_{s})] \geq 0\} = \sup_{s\leq t}G_0(\mu^0_s),\]
 where
$(\mu^\epsilon_{t})_{0\leq t \leq T}$ and $(\mu^0_{t})_{0\leq t \leq T}$ are the family of marginal laws of $(U^\epsilon_{t})_{0 \leq t \leq T}$ and $(U^0_{t})_{0 \leq t \leq T}$, respectively.

By  the above representations of $K^\epsilon$ and $K^0$, (\ref{eq Wa dis}), and Lemma \ref{lem1}, we have
\begin{eqnarray}\label{XIII-eq2}
\sup_{t \in [0,T]}|K^{\epsilon}_t - K^{0}_t|^2
&=&
\sup_{t \in [0,T]}|\sup_{s\leq t}G_0(\mu^\epsilon_s) - \sup_{s\leq t}G_0(\mu^0_s)|^2\nonumber\\
 &\leq&
 \sup_{t \in [0,T]}|G_0(\mu^\epsilon_t) - G_0(\mu^0_t)|^2\nonumber\\
  &\leq&(\frac{M}{m})^{2} \mathbb{E}[ \sup_{t\in [0,T]}|U^{\epsilon}_t - U^0_t|^2 ]\nonumber\\
  &\leq& C \bigg\{ \int_0^T \mathbb{E}[\sup_{l \in [0,s]}|X^{\epsilon}_l - X^{0}_l|^2]\mathrm{d}s
                                                                + \epsilon(1 + L_{T,\xi}) \bigg\}.
\end{eqnarray}
 By inserting (\ref{XIII-eq2}) into (\ref{XIII-eq1}) we obtain
\begin{equation}\label{XIII-eq3}
\mathbb{E}[\sup_{t \in [0,T]}|X^{\epsilon}_t- X^{0}_t|^2] \leq C\epsilon + C\int_0^T \mathbb{E}[\sup_{l \in [0,s]}|X^{\epsilon}_l - X^{0}_l|^2]\mathrm{d}s.
\end{equation}
Applying the Gronwall inequality and letting $\epsilon$ tend to 0,
\begin{equation*}\label{XIII-eq4}
\lim_{\epsilon \rightarrow 0} \mathbb{E}[\sup_{t \in [0,T]}|X^{\epsilon}_t- X^{0}_t|^2] = 0.
\end{equation*}
In combination with \eqref{XIII-eq2}, it follows that
\begin{equation*}\label{XIII-eq5}
\lim_{\epsilon \rightarrow 0} \sup_{t \in [0,T]}|K^{\epsilon}_t - K^{0}_t| =0.
\end{equation*}
The proof of (\ref{eq lim epsi 0}) is complete.

\vskip 0.2cm

Now, we will prove that there exists a constant $C_{T,N,\xi}$ such that
\begin{equation}\label{III-eq2.20}
\sup_{\epsilon\in(0,1]}\mathbb{E}[\sup_{t \in [0,T]}|Z^{\varphi^{\epsilon}}_t|^2] \leq C_{T,N,\xi}.
\end{equation}
By Assumption \ref{ass0}, the Cauchy-Schwarz inequality, H\"{o}lder inequality and Burkholder-Davis-Gundy inequality, and the facts that $\varphi^\epsilon \in \tilde{S}_N $ and  $K^\epsilon$ is a nondecreasing deterministic continuous function with $K^\epsilon_{0}=0$, we have
\begin{equation}\label{III-eq2.21}
\begin{aligned}
      &\mathbb{E}[\sup_{t \in [0,T]}|Z^{\varphi^{\epsilon}}_t|^2]\\
&\leq 5\bigg\{ |\xi|^2 + \mathbb{E}[\sup_{t \in [0,T]}|\int_0^t b(Z^{\varphi^\epsilon}_s) \mathrm{d}s|^2]
               + \mathbb{E}[\sup_{t \in [0,T]}|\sqrt{\epsilon}\int_0^t \sigma(Z^{\varphi^\epsilon}_s)\mathrm{d}B_{s}|^2]\\
&\ \ \ \ \ \ \ + \mathbb{E}[\sup_{t \in [0,T]}|\int_0^t \sigma(Z^{\varphi^\epsilon}_s)\varphi^{\epsilon}(s)\mathrm{d}s|^2]
               +|K^{\epsilon}_T|^2 \bigg\}\\
&\leq C\bigg\{ |\xi|^2+ T\mathbb{E}[\int_0^T|b(Z^{\varphi^\epsilon}_s)|^2\mathrm{d}s]
               + \epsilon\mathbb{E}[\int_0^T|\sigma(Z^{\varphi^\epsilon}_s)|^2\mathrm{d}s]\\
&\ \ \ \ \ \ \ + \mathbb{E}\big[(\int_0^T|\sigma(Z^{\varphi^\epsilon}_s)|^2\mathrm{d}s)(\int_0^T|\varphi^\epsilon(s)|^2\mathrm{d}s) \big]
               +|K^{\epsilon}_T|^2   \bigg\}\\
&\leq  C \bigg\{ |\xi|^2 + T\mathbb{E}[\int_0^T|b(Z^{\varphi^\epsilon}_s)|^2\mathrm{d}s] + (\epsilon+ N)\mathbb{E}[\int_0^T|\sigma(Z^{\varphi^\epsilon}_s)|^2\mathrm{d}s] + |K^\epsilon_T|^2\bigg\}\\
&\leq  C(1+|\xi|^2) +C\int_0^T\mathbb{E}[\sup_{l \in [0,s]}|Z^{\varphi^{\epsilon}}_l|^2]\mathrm{d}s,
\end{aligned}
\end{equation}
here, we have used (\ref{eq uniform K ep}) in the last step.
By the Gronwall inequality, we complete the proof of \eqref{III-eq2.20}.

\vskip 0.2cm

We are in
the position to prove Claim $(H_2)$, i.e., (\ref{eq H2}).

Recall that $Y^{\varphi^\epsilon}$ is the solution to (\ref{III-eq2.16}) with $\varphi$ replaced by ${\varphi^\epsilon}$.
Using arguments similar to that proving \eqref{XIII-eq3} and \eqref{III-eq2.21}, we get
\begin{equation}\label{III-eq2.22}
\begin{aligned}
       &\mathbb{E}[\sup_{t \in [0,T]}|Z^{\varphi^\epsilon}_t-Y^{\varphi^\epsilon}_t|^2]\\
&\leq 4\bigg\{ \mathbb{E}[\sup_{t \in [0,T]}|\int_0^t (b(Z^{\varphi^\epsilon}_s)-b(Y^{\varphi^\epsilon}_s))ds|^2]
     + \mathbb{E}[\sup_{t \in [0,T]}|\sqrt{\epsilon} \int_0^t \sigma(Z^{\varphi^\epsilon}_s)dB_s|^2]\\
&\ \ \ +\mathbb{E}[\sup_{t \in [0,T]}|\int_0^t (\sigma(Z^{\varphi^\epsilon}_s)-\sigma(Y^{\varphi^\epsilon}_s))\cdot \varphi^\epsilon (s)ds|^2]
    +\sup_{t \in [0,T]}|K^\epsilon_t-K^0_t|^2 \bigg\}\\
&\leq C\bigg\{T\mathbb{E}[\int_0^T|b(Z^{\varphi^\epsilon}_s)-b(Y^{\varphi^\epsilon}_s)|^2ds]+\epsilon\mathbb{E}[\int_0^T|\sigma(Z^{\varphi^\epsilon}_s)|^2ds]\\
&\ \ \  +N\mathbb{E}[\int_0^T|\sigma(Z^{\varphi^\epsilon}_s)-\sigma(Y^{\varphi^\epsilon}_s)|^2ds] +\sup_{t \in [0,T]}|K^\epsilon_t-K^0_t|^2 \bigg\}\\
&\leq C\bigg\{  \epsilon(1+\mathbb{E}[\sup_{t \in [0,T]}|Z^{\varphi^\epsilon}_t|^2])
              +\sup_{t \in [0,T]}|K^\epsilon_t-K^0_t|^2  \bigg\}\\
     &\ \ \ +C_{T,N}\int_0^T\mathbb{E}[\sup_{l \in [0,s]}|Z^{\varphi^\epsilon}_l-Y^{\varphi^\epsilon}_l|^2]\mathrm{d}s.\\
&\leq C_{T,N,\xi}\bigg\{ \epsilon+\sup_{t \in [0,T]}|K^\epsilon_t-K^0_t|^2 \bigg\}
     +C_{T,N}\int_0^T\mathbb{E}[\sup_{l \in [0,s]}|Z^{\varphi^\epsilon}_l-Y^{\varphi^\epsilon}_l|^2]ds.
\end{aligned}
\end{equation}
Here we have used \eqref{III-eq2.20} at the last step.

Hence, by the Gronwall inequality and \eqref{eq lim epsi 0}, letting $\epsilon \rightarrow 0 $, we obtain
\begin{equation}\label{III-eq2.23}
\begin{aligned}
\lim_{\epsilon \rightarrow 0}\mathbb{E}[\sup_{t \in [0,T]}|Z^{h^\epsilon}_t-Y^{h^\epsilon}_t|^2]=0.
\end{aligned}
\end{equation}
The proof of Claim $(H_2)$ is finished, completing the whole proof of Theorem \ref{thm3}.

\end{proof}

\begin{remark}\label{rem 1}
  We stress that $K^\epsilon$ in Eq. \eqref{III-eq2.18} is the second part of the strong solution $(X^\epsilon, K^\epsilon)$ to Eq. \eqref{III-eq2.1}. This is somehow surprising.  The reason is that when perturbing the Brownian motion in the arguments of the mapping $\Gamma^{\epsilon}(\cdot)$, $K^\epsilon$ is already deterministic and hence it is not affected by the perturbation. For the details, we refer to Theorems 3.6, 3.8 and 4.4 in \cite{WYJT}.  An example is also introduced in \cite{WYJT};  see \cite[Example 1.1]{WYJT}.
\end{remark}

\section{Large deviations for short time}

Let $(X,K)$ be the unique flat deterministic solution to Eq. \eqref{II-eq1.1}. In this section, we consider  the small time asymptotic behavior of $X_t$ as $t \downarrow 0$.

We rewrite the equation \eqref{II-eq1.1} as, for any $\varepsilon\in(0,1]$,
\begin{align}
\left\{
\begin{aligned}\label{II-eq1.1-short time}
& X_{\epsilon t}=\xi + \epsilon\int_0^t b(X_{\epsilon s})\mathrm{d}s+\ \int_0^t \sigma(X_{\epsilon s})\mathrm{d}B_{\epsilon s}+\ K_{\epsilon t},\ t\in [0,1],  \\
& \mathbb{E}[h(X_{\epsilon t})]\geq 0, \quad
\int_0^t \mathbb{E}[h(X_{\epsilon s})]\mathrm{d}K_{\epsilon s}=0,\ t\in[0,1].\\
\end{aligned}
\right.
\end{align}
Set $V_t^\epsilon=\epsilon^{-\frac{1}{2}}B_{\epsilon t}$ and $\mathcal{G}^\epsilon_t=\mathcal{F}_{\epsilon t}$. We know that $\left\{V_t^\epsilon\right\}_{t\ge 0}$ is $\left\{\mathcal{G} ^\epsilon_t\right\}_{t\ge 0}$-Brownian motion. Then $X_{\epsilon t}$ has the same law of the solution for the following equation:
\begin{equation}\label{V-eq2.1}
\left\{
\begin{aligned}
&\tilde X_{t}^\epsilon=\xi + \epsilon \int_0^t b( \tilde X_{s}^{\epsilon} )\mathrm{d}s
                       + \ \sqrt{\epsilon}\int_0^t \sigma( \tilde X_{s}^{\epsilon} )\mathrm{d}B_{s}+\ \tilde K_{t}^\epsilon,\ t\in[0,1],  \\
&\mathbb{E}[h(\tilde X_{t}^\epsilon)]\geq0,\quad
\int_0^t \mathbb{E}[h(\tilde X_{s}^\epsilon)]\mathrm{d}\tilde K_{s}^{\epsilon}=0, \ t\in[0,1].\\
\end{aligned}
\right.
\end{equation}

Denote the solution of equation (\ref{V-eq2.1}) by $\tilde{X}^\epsilon=\{\tilde{X}^\epsilon_t\}_{t\in [0,1]}$.
To study the small time asymptotic behavior of the solution $X_t$ for equation \eqref{II-eq1.1} as $t \downarrow 0$, it equivalently discusses the asymptotic behavior of $X_{\epsilon t}$ as $\epsilon \downarrow 0$.
From the definition of
$\tilde{X}^\epsilon$, it turns to consider the asymptotic behavior of $\tilde{X}^\epsilon$ as $\epsilon\downarrow 0$, which is equivalent to Freidlin-Wentzell type LDP for $\tilde{X}^\epsilon$ as $\epsilon \downarrow 0$.

To state the main result in this section, we first introduce the following mean reflected ODE and the so called skeleton equation.
\begin{align}\label{V-eq2.2}
\left\{
\begin{aligned}
&  X^0_{t}=\xi + K^0_t, \ \ t\in[0,1],  \\
& h(X^0_{t})\geq 0, \quad
\int_0^t h(X^0_{s})\mathrm{d}K^0_s=0,\ t\in[0,1],\\
\end{aligned}
\right.
\end{align}
and for any $ \varphi \in L^2([0,1], \mathbb{R})$,
\begin{equation}\label{V-eq2.4}
\tilde Y^\varphi_t= \xi + \int_0^t \sigma(\tilde Y^\varphi_s)\varphi(s)\mathrm{d}s+K^0_t, \ \ t\in[0,1].
\end{equation}
By Proposition \ref{thm1}, we know that there exists a unique solution $(X^0,K^0)=(X^0_t,K^0_t)_{t\in[0,1]}$ to (\ref{V-eq2.2}), and
it is obvious that
\begin{eqnarray}\label{eq small time}
(X^0,K^0)=(\xi, 0).
\end{eqnarray}
We remark that $K^0$ in Eq. \eqref{V-eq2.4} is the second part of the strong solution $(X^0, K^0)$ to Eq. (\ref{V-eq2.2}). Hence, by (\ref{eq small time}),
We rewrite Eq. \eqref{V-eq2.4} as, for any $ \varphi \in L^2([0,1], \mathbb{R})$,
\begin{equation}\label{V-eq2.4 version 2}
\tilde Y^\varphi_t= \xi + \int_0^t \sigma(\tilde Y^\varphi_s)\varphi(s)\mathrm{d}s, \ \ t\in[0,1].
\end{equation}
Obviously, under Assumption \ref{ass0}, Eq. \eqref{V-eq2.4 version 2} has a unique solution $\tilde Y^\varphi=(\tilde Y^\varphi_t)_{t\in[0,1]}\in C([0,1],\mathbb{R})$.

We now state the main result in this section.
\begin{theorem}\label{thm4}
Suppose that Assumptions \ref{ass0} and \ref{ass1} hold.   Then the family
$\{\tilde{X}^\epsilon, \epsilon \in (0,1]\}$ satisfies LDP in the space $C([0,1],\mathbb{R})$ as $\epsilon$ tend to 0 with the rate function
\begin{equation}\label{V-eq2.5}
I(g):= \inf_{\{ \varphi \in L^2([0,1],\mathbb{R}): g=\tilde Y^\varphi\}}\{\frac{1}{2}\int_0^1|\varphi(s)|^2\mathrm{d}s \},\ \forall g\in C([0,1],\mathbb{R}),
\end{equation}
with the convention that $\inf{\emptyset}=\infty$, here $\tilde Y^\varphi$ solve Eq. \eqref{V-eq2.4 version 2}.

\end{theorem}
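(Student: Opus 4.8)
The plan is to prove Theorem \ref{thm4} by the weak convergence method, exactly as in the proof of Theorem \ref{thm3}, verifying the two sufficient conditions $(H_1)$ and $(H_2)$ recalled there (Theorem 4.4 in \cite{WYJT}). Since $(X^0,K^0)=(\xi,0)$ by \eqref{eq small time}, the skeleton map $\varGamma^0$ is governed by the reduced equation \eqref{V-eq2.4 version 2}, so that $\tilde Y^\varphi=\varGamma^0(\int_0^\cdot\varphi(s)\mathrm{d}s)$. By the Yamada--Watanabe and Girsanov theorems, for $\varphi^\epsilon\in\tilde S_N$ the relevant stochastic control equation is the analogue of \eqref{III-eq2.18} with drift $\epsilon\,b$ and diffusion $\sqrt\epsilon\,\sigma$, namely the process $\tilde Z^{\varphi^\epsilon}$ solving
\begin{equation*}
\tilde Z^{\varphi^\epsilon}_t=\xi+\epsilon\int_0^t b(\tilde Z^{\varphi^\epsilon}_s)\mathrm{d}s+\sqrt\epsilon\int_0^t\sigma(\tilde Z^{\varphi^\epsilon}_s)\mathrm{d}B_s+\int_0^t\sigma(\tilde Z^{\varphi^\epsilon}_s)\varphi^\epsilon(s)\mathrm{d}s+\tilde K^\epsilon_t,
\end{equation*}
where, exactly as in Remark \ref{rem 1}, the reflection $\tilde K^\epsilon$ is the deterministic second component of the solution to \eqref{V-eq2.1} and is hence unaffected by the Girsanov shift.

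For $(H_1)$, I would repeat Step 1 of Theorem \ref{thm3}, which is in fact simpler here since \eqref{V-eq2.4 version 2} carries neither a drift nor a reflection term. Given a family $\varphi_n$ converging weakly to $\varphi$ in $S_N$, the Lipschitz bound on $\sigma$ and the constraint $\varphi_n\in S_N$ yield a uniform sup-norm bound and an equicontinuity estimate for the corresponding solutions $\tilde Y^n$, so that Arzel\`a--Ascoli furnishes a uniformly convergent subsequence; passing to the limit in \eqref{V-eq2.4 version 2} via the weak convergence of $\varphi_n$ and invoking uniqueness identifies the limit as $\tilde Y^\varphi$.

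For $(H_2)$, the genuinely new ingredient is that the reflection must vanish in the limit, i.e.\ $\sup_{t\in[0,1]}|\tilde K^\epsilon_t|\to0$, since $K^0\equiv0$. After establishing the uniform bound $\sup_{\epsilon\in(0,1]}\mathbb{E}[\sup_t|\tilde Z^{\varphi^\epsilon}_t|^2]\le C_{N,\xi}$ by the Gronwall argument of \eqref{III-eq2.21}, I would treat the reflection via the representation of Proposition \ref{thm1}: writing $\tilde U^\epsilon_t=\xi+\epsilon\int_0^t b(\tilde X^\epsilon_s)\mathrm{d}s+\sqrt\epsilon\int_0^t\sigma(\tilde X^\epsilon_s)\mathrm{d}B_s$ and $\tilde\mu^\epsilon_s$ for its law, we have $\tilde K^\epsilon_t=\sup_{s\le t}G_0(\tilde\mu^\epsilon_s)$. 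Because $h$ is increasing and $h(\xi)\ge0$, the law $\delta_\xi$ concentrated at $\xi$ satisfies $G_0(\delta_\xi)=0$; hence Lemma \ref{lem1} together with \eqref{eq Wa dis} yields $\sup_t|\tilde K^\epsilon_t|\le\frac{M}{m}\sup_t\mathbb{E}[|\tilde U^\epsilon_t-\xi|]$, and the right-hand side tends to $0$ as $\epsilon\to0$ by the uniform $L^2$ bound on $\tilde X^\epsilon$ and the scalings $\epsilon,\sqrt\epsilon$ in the two integrals. Comparing $\tilde Z^{\varphi^\epsilon}$ with $\tilde Y^{\varphi^\epsilon}$ exactly as in \eqref{III-eq2.22} and applying Gronwall then gives $\mathbb{E}[\sup_t|\tilde Z^{\varphi^\epsilon}_t-\tilde Y^{\varphi^\epsilon}_t|^2]\le C_{N,\xi}\big(\epsilon+\sup_t|\tilde K^\epsilon_t|^2\big)\to0$, which is $(H_2)$.

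I expect the reflection estimate $\sup_t|\tilde K^\epsilon_t|\to0$ to be the only real obstacle: it is where the short-time scaling genuinely enters, and it hinges on combining the Lipschitz continuity of $G_0$ (Lemma \ref{lem1}) with the compatibility condition $h(\xi)\ge0$ to force $G_0(\delta_\xi)=0$. Once this is in hand, the remainder is a scaled repetition of the arguments of Section \ref{Section 3}.
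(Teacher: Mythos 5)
Your proposal is correct and follows exactly the route the paper intends: the paper omits the proof of Theorem \ref{thm4}, stating only that it is an extension of the weak convergence argument of Section \ref{Section 3}, and you carry out that extension faithfully, with the correct control equation (drift $\epsilon b$, diffusion $\sqrt{\epsilon}\sigma$, deterministic $\tilde K^\epsilon$ unaffected by the Girsanov shift as in Remark \ref{rem 1}). You also correctly isolate and resolve the one genuinely new point, namely that $\sup_{t\in[0,1]}|\tilde K^\epsilon_t|\to 0$, via $G_0(\delta_\xi)=0$ (from $h(\xi)\ge 0$), Lemma \ref{lem1}, and the bound $W_1(\tilde\mu^\epsilon_s,\delta_\xi)\le\mathbb{E}|\tilde U^\epsilon_s-\xi|$ together with the uniform second-moment estimate and the $\epsilon,\sqrt{\epsilon}$ scalings.
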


An interested reader would have little difficulties extending the
arguments in Section \ref{Section 3} to obtain Theorem \ref{thm4}. Here, we omit the proof.

\section{Malliavin calculus for MRSDE}


Suppose that $(X, K)$ is the unique flat deterministic solution to Eq. \eqref{II-eq1.1}.
In this section, we will calculate the Malliavin derivative of $X$.

We first present some preliminaries on Malliavin calculus from Chapter 1 in \cite{MCART}. Let $H:=L^2([0,T],\mathbb{R})$ be the Hilbert space
with the scalar inner product $\langle \cdot, \cdot \rangle_H$  and the norm $|\cdot|_H$. The corresponding Malliavin derivative of a Malliavin
differentiable random variable $F\in\mathcal{F}_T$ is denoted by $DF=\{D_t F\}_{t \in [0,T]}$. For each $p \geq 1$, the Sobolev space $D^{1,p}$ is defined as the
closure of the class of smooth random variables $F$ with respect to the norm
\[ \| F \|_{1,p}=[\mathbb{E}|F|^p+ \mathbb{E}|DF|_H^p]^{1/p}. \]
Denote $D^{1,\infty}:=\cap_{p\geq1}D^{1,p}$.

In this section, we require the following additional assumption.
\begin{Assumption}\label{ass2}
The mappings $b$ and $\sigma$ are continuously differentiable functions, and their derivatives are denoted by $b^{\prime}$ and $\sigma^{\prime}$,
respectively.
\end{Assumption}

We state the first main result of this section.

\begin{theorem}\label{thm5}
Suppose that Assumptions \ref{ass0}, \ref{ass1} and \ref{ass2} hold, then $X_t$ belongs to $D^{1,\infty}$ for any $t \in [0,T]$. Moreover, for each $ p \geq 1$,
\begin{equation}\label{IV-eq2.2}
\sup_{0\leq r \leq t}\mathbb{E}[\sup_{r\leq s \leq T}|D_r X_s|^p]< \infty ,
\end{equation}
and for $r \leq t \leq T$, the derivative $D_r X_t$ satisfies the following linear equation:
\begin{equation}\label{IV-eq2.3}
D_r X_t = \sigma (X_r) + \int_r^t \sigma^{\prime}(X_s)D_rX_s\mathrm{d}B_s +\int_r^t b^{\prime}(X_s)D_rX_s\mathrm{d}s,
\end{equation}
and $D_rX_t=0$, for $0 \leq t <r \leq T$.
\end{theorem}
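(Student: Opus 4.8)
The plan is to establish the Malliavin differentiability of $X_t$ together with the equation for $D_r X_t$ by exploiting the crucial structural fact, already used in Section \ref{Section 3}, that the reflection term $K$ is a \emph{deterministic} function. This is the key simplification: since $K_t$ depends only on the laws $\mu_s$ of $U_s$ and not on the random path itself, it is a constant from the viewpoint of Malliavin calculus, so $D_r K_t = 0$. Consequently $X$ satisfies, up to the deterministic drift $K$, a standard It\^o SDE, and the whole machinery for Malliavin differentiability of SDE solutions applies almost verbatim.

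First I would set up a Picard approximation scheme $X^{n+1}_t = \xi + \int_0^t b(X^n_s)\mathrm{d}s + \int_0^t \sigma(X^n_s)\mathrm{d}B_s + K_t$, starting from $X^0 \equiv \xi$, and show by induction that each $X^n_t \in D^{1,\infty}$ with derivatives satisfying the linear recursion obtained by formally differentiating, namely
\begin{equation*}
D_r X^{n+1}_t = \sigma(X^n_r) + \int_r^t \sigma'(X^n_s) D_r X^n_s \,\mathrm{d}B_s + \int_r^t b'(X^n_s) D_r X^n_s \,\mathrm{d}s,
\end{equation*}
for $r \le t$ and $D_r X^{n+1}_t = 0$ for $t < r$, with $K_t$ contributing nothing since it is deterministic. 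The base case uses that $\xi$ and $K_t$ are non-random. For the inductive step I would invoke the standard chain rule for Malliavin derivatives of $b(X^n_s)$ and $\sigma(X^n_s)$ (valid under Assumption \ref{ass2}, since $b',\sigma'$ exist and $b,\sigma$ are Lipschitz, hence of at most linear growth) together with the commutation of $D_r$ with the It\^o and Lebesgue integrals.

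The technical core is then a two-part uniform estimate. I would first show that $\sup_n \sup_{0\le r\le t} \mathbb{E}[\sup_{r\le s\le T}|D_r X^n_s|^p] < \infty$ for every $p \ge 1$: treating $r$ as a parameter and applying Burkholder--Davis--Gundy, H\"older, the linear growth of $\sigma$ (so $|\sigma(X^n_r)|^p$ is controlled via the uniform moment bound $\sup_n \mathbb{E}[\sup_{t\le T}|X^n_t|^p]<\infty$, itself proved by the same Gronwall argument as \eqref{eq uniform X ep}), and finally Gronwall's inequality on the map $s \mapsto \mathbb{E}[\sup_{r\le l\le s}|D_r X^n_l|^p]$. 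Second, I would prove that $X^n_t \to X_t$ in $D^{1,2}$ by showing $\mathbb{E}[\sup_{t\le T}|X^n_t - X_t|^2] \to 0$ (Picard contraction, using that the $K_t$ terms cancel identically in the difference) and that the derivative processes are Cauchy in the relevant norm. Combining the uniform bound with the closability of the operator $D$ (Lemma 1.2.3 in \cite{MCART}) yields $X_t \in D^{1,\infty}$, estimate \eqref{IV-eq2.2}, and passage to the limit in the recursion gives the linear equation \eqref{IV-eq2.3}.

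The main obstacle I anticipate is justifying the interchange of the limit and the Malliavin derivative rigorously, i.e.\ confirming that the limit of the derivative sequence is genuinely $DX_t$ and not merely some auxiliary process. The uniform $D^{1,p}$ bound guarantees that $\{X^n_t\}$ is bounded in the reflexive space $D^{1,2}$, so a subsequence converges weakly there; since $X^n_t \to X_t$ strongly in $L^2$, the closedness of $D$ forces the weak limit of the derivatives to equal $D X_t$. One must also verify that the candidate limit solves \eqref{IV-eq2.3}, which requires passing to the limit inside the stochastic integral $\int_r^t \sigma'(X^n_s) D_r X^n_s \,\mathrm{d}B_s$; here the continuity of $\sigma'$ (Assumption \ref{ass2}), the convergence $X^n_s \to X_s$, and the uniform derivative bounds let me control the integrand via dominated convergence in $L^2(\Omega \times [0,T])$. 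The deterministic nature of $K$ removes what would otherwise be the genuinely hard issue—differentiating the reflection—so the argument reduces to the classical one, and I would remark that nondegeneracy of $\sigma$ together with \eqref{IV-eq2.3} then yields, via the standard criterion, the absolute continuity of the law of $X_t$.
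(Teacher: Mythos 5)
Your proposal is correct and follows essentially the same route as the paper: both hinge on the observation that $K$ is deterministic (hence Malliavin-constant), set up the Picard iteration with $K_t$ added as a deterministic drift, prove by induction the $D^{1,\infty}$ membership and uniform $L^p$ bounds on the derivatives via Burkholder--Davis--Gundy and Gronwall, and then pass to the limit using the closability criterion (Proposition 1.5.5 in \cite{MCART}). The only cosmetic difference is your choice of initial iterate $X^0\equiv\xi$ versus the paper's $Y^0_t=\xi+K_t$, which is immaterial.
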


\begin{proof}
Consider the Picard approximations given by
\begin{align}\label{IV-eq2.4}
&Y^0_t = \xi + K_t,  \nonumber\\
&Y^{n+1}_t=\xi + \int_0^t b(Y^n_s)\mathrm{d}s + \int_0^t \sigma(Y^n_s)\mathrm{d}B_s +K_t, \ \ n \geq 0. \nonumber\
\end{align}
Here $K$ is the second part of the strong solution $(X, K)$ to Eq. \eqref{II-eq1.1}.

By Assumption 2.1, it follows from a classical argument that, for any $p\geq1$,
\begin{equation}\label{IV-eq2.5}
\lim_{n\rightarrow\infty}\mathbb{E}[\sup_{0 \leq t \leq T}| Y^{n}_t-X_t |^p] = 0.
\end{equation}
Notice that $K$ is a nondecreasing deterministic continuous function with $K_{0}=0$. Using the same technique as the proof of Theorem 2.2.1 in \cite{MCART}, by induction on $n$, the following three statements hold:

(1) For each $n \geq 0$ and $t \in [0,T]$, $Y^n_t \in D^{1,\infty}$.

(2) For all $p > 1$
\begin{equation}\label{IV-eq2.6}
\psi_n(t):=\sup_{0\leq r \leq t}\mathbb{E}[\sup_{r \leq s \leq t}|D_r Y^n_s|^p] < \infty.
\end{equation}

(3) For some constants $c_1$ and $c_2$
\begin{equation}\label{IV-eq2.7}
\psi_{n+1}(t) \leq c_1+c_2\int_0^t \psi_n(s)\mathrm{d}s.
\end{equation}
By \eqref{IV-eq2.6}, \eqref{IV-eq2.7} and Gronwall's lemma, $\{DY^n_t \}_{n\geq 0}$ are bounded in
$L^p(\Omega, H)$ uniformly in $n$ for all $p \geq 2$. Therefore, combining \eqref{IV-eq2.5} and Proposition 1.5.5 in \cite{MCART},
we obtain that $X_t \in D^{1,\infty}$. Finally, applying the operator $D$ to Eq. \eqref{II-eq1.1} and using Proposition 1.2.4 in \cite{MCART},
we get \eqref{IV-eq2.3}.

The proof of Theorem \ref{thm5} is complete.

\end{proof}

As an application of Theorem \ref{thm5}, we have
\begin{theorem}\label{thm6}
Suppose that Assumptions \ref{ass0}, \ref{ass1} and \ref{ass2} hold, and $\sigma$ is nondegenerate, i.e., $\sigma(x)\neq0$ for all $x\in\mathbb{R}$.  Then for any $0 \leq t \leq T$, the law of $X_t$ is absolutely continuous with respect to the Lebesgue measure on $\mathbb{R}$.
\end{theorem}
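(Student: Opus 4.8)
The plan is to apply the classical Bouleau--Hirsch criterion for absolute continuity of the law of a Malliavin differentiable random variable (e.g. Theorem 2.1.3 in \cite{MCART}): if $F\in D^{1,1}$ and $|DF|_H>0$ almost surely, then the law of $F$ is absolutely continuous with respect to the Lebesgue measure. By Theorem \ref{thm5} we already know that $X_t\in D^{1,\infty}\subset D^{1,1}$ for every $t\in[0,T]$, so the whole task reduces to proving that
$$
|DX_t|_H^2=\int_0^t |D_rX_t|^2\,\mathrm{d}r>0 \qquad \mathbb{P}\text{-a.s.}
$$
for $t>0$; note that at $t=0$ the variable $X_0=\xi$ is deterministic, so the assertion is to be understood for $t>0$.

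The key observation is that, for each fixed $r$, equation \eqref{IV-eq2.3} is a \emph{linear} stochastic differential equation in the unknown process $s\mapsto D_rX_s$, with initial value $\sigma(X_r)$ at time $s=r$. Since $b$ and $\sigma$ are Lipschitz by Assumption \ref{ass0} and of class $C^1$ by Assumption \ref{ass2}, the derivatives $b'$ and $\sigma'$ are bounded, and the linear equation is solved explicitly by the Dol\'eans--Dade exponential
$$
D_rX_t=\sigma(X_r)\exp\bigg(\int_r^t b'(X_s)\,\mathrm{d}s-\tfrac12\int_r^t \big(\sigma'(X_s)\big)^2\,\mathrm{d}s+\int_r^t \sigma'(X_s)\,\mathrm{d}B_s\bigg),\qquad r\le t.
$$
The exponential factor is strictly positive almost surely, and $\sigma(X_r)\neq 0$ by the nondegeneracy assumption; hence for each fixed $r\in[0,t)$ one has $D_rX_t\neq 0$ $\mathbb{P}$-almost surely.

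To finish I would upgrade this ``for each fixed $r$'' statement to the required pathwise positivity of the integral by a Fubini argument: the set $\{(r,\omega)\in[0,t]\times\Omega: D_rX_t(\omega)=0\}$ has zero product measure, so for $\mathbb{P}$-almost every $\omega$ the slice $\{r\in[0,t]: D_rX_t(\omega)=0\}$ is Lebesgue-null, whence $\int_0^t|D_rX_t|^2\,\mathrm{d}r>0$ almost surely. The Bouleau--Hirsch criterion then yields the absolute continuity of the law of $X_t$. I expect the main technical obstacle to be the rigorous justification of the exponential representation of $D_rX_t$: one must verify that the coefficients $\sigma'(X_s)$ and $b'(X_s)$ are sufficiently integrable, which follows from the boundedness of $b',\sigma'$ together with the moment bound \eqref{IV-eq2.2}, so that the stochastic exponential is well defined and strictly positive a.s., and that the joint measurability in $(r,\omega)$ needed for the Fubini step indeed holds.
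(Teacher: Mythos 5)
Your proof is correct and takes essentially the same route as the paper: both reduce the claim, via Nualart's criterion, to the a.s.\ positivity of $|DX_t|_H^2=\int_0^t|D_rX_t|^2\,\mathrm{d}r$ and then solve the linear equation \eqref{IV-eq2.3} explicitly --- you write the Dol\'eans--Dade exponential directly, while the paper writes $D_rX_t=Y_tY_r^{-1}\sigma(X_r)$ with $Y$ the fundamental solution and $Z=Y^{-1}$ checked by It\^{o}'s formula, which is the same object. Your observation that the statement must be read for $t>0$ (since $X_0=\xi$ is deterministic) is a fair point the paper glosses over.
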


\begin{proof}
According to Corollary 2.1.2 in \cite{MCART}, it suffices to show that for $\mathbb{P}\text{-}a.s.$ $\omega \in \Omega$,
$\langle DX_t, DX_t \rangle_H >0 $.
By Theorem \ref{thm5}, for $r \leq t \leq T$, the derivative $D_r X_t$ satisfies the following linear equation:
\begin{equation*}\label{IV-eq2.8}
D_r X_t= \sigma (X_r)+ \int_r^t \sigma^{\prime}(X_s) D_rX_s\mathrm{d}B_s +\int_r^t b^{\prime}(X_s)D_rX_s\mathrm{d}s,
\end{equation*}
and $D_rX_t=0$, for $0 \leq t <r \leq T$.

Now we deduce a simpler expression for the derivative $DX_t$.
Consider the following processes:
\[Y_t = 1+ \int_0^t \sigma^{\prime}(X_s)Y_s \mathrm{d}B_s + \int_0^t b^{\prime}(X_s)Y_s \mathrm{d}s, \ \ 0 \leq t \leq T,\]
and
\[Z_t= 1- \int_0^t \sigma^{\prime}(X_s)Z_s \mathrm{d}B_s - \int_0^t [b^{\prime}(X_s)-\sigma^{\prime}(X_s)^{2}]Z_s \mathrm{d}s, \ \ 0 \leq t \leq T. \]
By the It\^{o} formula, it is easy to check that $Y_t Z_t=Z_t Y_t=1$, which implies that for any $0 \leq t \leq T$, $Y^{-1}_t=Z_t$.
Then considering the process $\{ Y_tY^{-1}_r\sigma(X_r), \ t \geq r \}$, we have
\[D_rX_t=Y_tY^{-1}_r\sigma(X_r), \ \ r \leq t \leq T .\]
It follows that
\[\langle DX_t, DX_t \rangle_H=\int_0^t|Y_tY^{-1}_r\sigma(X_r)|^2\mathrm{d}r >0 .\]
We complete the proof.
\end{proof}

\vskip 0.4cm

\noindent{\bf  Acknowledgement.}\  This work is partly supported by the National Natural Science Foundation of China (No. 12131019, No. 11971456,  No. 11721101, No. 11871184) and the Fundamental Research Funds for the Central Universities (No. WK3470000024, No. WK0010000076).

\vskip 0.3cm

\noindent{\bf Disclosure statement} The authors have not disclosed any competing interests.

\end{document}